\newtheorem{thm}{Theorem}
\title{ Covering Array Bounds Using Analytical Techniques}
\author{Ruyue (Julia) Yuan\\Department of Mathematics and Computer Science\\Valparaiso University\\Valparaiso, IN 46383 \and Zoe Koch\\Department of Mathematics\\University of Utah\\Salt Lake City, UT 84112\and Anant Godbole\\Department of Mathematics and Statistics\\East Tennessee State University\\Johnson City, TN 37614}
\date{}
\begin{document}
\def\lg{{\rm lg}}
\def\lr{\left(}
\def\rr{\right)}
\maketitle

\begin{abstract}

A $t$-covering array with entries from the alphabet ${\cal Q}=\{0,1,\ldots,q-1\}$ is a $k\times n$ stack, so that for any choice of $t$ (typically non-consecutive) columns, each of the $q^{t}$ possible $t$-letter words over ${\cal Q}$ appear at least once  among the rows of  the selected columns. We will show how a combination of the Lov\'asz local lemma; combinatorial
analysis; Stirling's formula; and Calculus enables one to find better asymptotic bounds for
the minimum size of $t$-covering arrays, notably for $t = 3, 4$.  Here size is measured in the number of rows, as expressed in terms of the number of columns. 
\end{abstract}

\section{Introduction}

A $t$-covering array with entries from the alphabet ${\cal Q}=\{0,1,\ldots,q-1\}$ is a $k\times n$ stack, so that for any choice of $t$ typically non-consecutive columns, each of the $q^{t}$ possible $t$-letter words over ${\cal Q}$ appear at least once  among the rows of  the selected columns. The following problem is central; see, e.g., \cite{colbourn}, \cite {sloane}:  Given the parameters $q, t$, what is the smallest $k$ for which a covering array with these parameters exists?  Specifically, we seek a function $k_0=k_0(n)=k_0(n,q,t)$ such that as $n\to\infty$, $k\ge k_0(n)\Rightarrow$ a $t$-covering array exists.  Sperner's theorem was used by Kleitman and Spencer (see \cite{sloane}) to give a very satisfactory answer for $t=q=2$, while the work of Roux (again, see \cite{sloane}) showed that for $t=3; q=2$, we have
\begin{equation}k_0(n,2,3)=7.65\lg n(1+o(1)),\end{equation}
where, here and throughout this paper, $\lg:=\log_2$.  A general upper bound of 
\begin{equation}k_0(n,q,t)=(t-1)\frac{\lg n}{\lg\left(q^t/(q^t-1)\right)}(1+o(1))\end{equation}
was produced in \cite{gss}.  Notice that plugging in $q=2, t=3$ in (2) yields a bound of
\[k_0(n,2,3)=10.3\lg n(1+o(1)),\]
which shows that the general bounds of \cite{gss} are inferior to the specific bound in (1), which was obtained by employing random methods with equal weight columns (an equal number of zeros and ones in each column in the binary case) either without (Roux) or with (\cite{gss}) the use of the Lov\'asz local lemma.  Some improvement in (2) was made in the \cite{dg}, where a ``tiling method" was employed.  In this paper, we adapt the methods of Roux (\cite{sloane}) and \cite{gss} to improve the bounds in (2) for several other cases.  The analysis is difficult but not daunting for the cases we consider: a combination of the Lov\'asz local lemma (see, e.g., \cite{alon}); elementary combinatorial
analysis; Stirling's formula; and Calculus is employed to obtain our new results.  
The case of $t=3, q\ge 3$ is considered in Section 2. We turn our attention to $t=4, q=2$, where double sums need to be employed, in Section 3. 

\section{The Case of $t=3$}
\label{sec:examples}

\subsection{$q=3:  3$-Covering Arrays with a Three-Letter Alphabet}

\begin{thm}
$$k_0(n,3,3)\le 32.03 \cdot \lg(n)(1+o(1)).$$
\end{thm}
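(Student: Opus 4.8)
The plan is to run the ``equal-weight column'' random construction used by Roux and \cite{gss} through the symmetric Lov\'asz local lemma. Write $k=3m$ and build a random $k\times n$ array whose $n$ columns are chosen independently, each a uniformly random word of $\{0,1,2\}^k$ with exactly $m$ zeros, $m$ ones, and $m$ twos. For a $3$-set of columns $C$ and a target word $w\in\{0,1,2\}^3$, let $A_{C,w}$ be the event that $w$ is absent from the three columns of $C$; the array is a $3$-covering array precisely when no $A_{C,w}$ occurs. Since distinct columns are independent, $A_{C,w}$ is mutually independent of the family $\{A_{C',w'}:C'\cap C=\emptyset\}$, and the number of pairs $(C',w')$ with $C'\cap C\neq\emptyset$ is at most $3\binom{n-1}{2}\cdot 3^3=O(n^2)$; so by the symmetric local lemma it will be enough to exhibit a common upper bound $p$ on $\Pr(A_{C,w})$ obeying $e\,p\,(d+1)\le 1$ with $d+1=O(n^2)$.

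The first real step is the exact evaluation of $p=\Pr(A_{C,w})$. Let $S,T,U\subseteq\{1,\dots,k\}$ be the sets of rows on which the three columns of $C$ take the three letters of $w$; these are independent, uniformly random $m$-subsets, and $A_{C,w}=\{S\cap T\cap U=\emptyset\}$. Conditioning on $|S\cap T|=r$ (a hypergeometric variable) and then on $U$ avoiding the $r$ common rows yields
\[p=\sum_{r=0}^{m}\frac{\binom{m}{r}\binom{2m}{m-r}}{\binom{3m}{m}}\cdot\frac{\binom{3m-r}{m}}{\binom{3m}{m}}.\]
Next I would feed each binomial coefficient into Stirling's formula: with $\alpha=r/m$ and $H(x)=-x\lg x-(1-x)\lg(1-x)$ the $r$-th summand is $2^{m f(\alpha)+O(\log m)}$, where
\[f(\alpha)=H(\alpha)+2H\!\left(\tfrac{1-\alpha}{2}\right)+(3-\alpha)H\!\left(\tfrac{1}{3-\alpha}\right)-6H\!\left(\tfrac13\right).\]
Because the sum has only $m+1$ terms and its largest term is within a polynomial factor of $2^{m\max f}$, a Laplace-type argument then gives $\lg p = m\max_{0\le\alpha\le 1}f(\alpha)+o(m)$.

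The last step is Calculus: maximize $f$. Differentiation gives $f'(\alpha)=\lg\dfrac{(1-\alpha)^2(2-\alpha)}{\alpha(1+\alpha)(3-\alpha)}$, so clearing denominators in $f'(\alpha)=0$ collapses to $\alpha^2-4\alpha+1=0$; the root in $(0,1)$ is $\alpha^{*}=2-\sqrt3$, and since $f'$ decreases monotonically from $+\infty$ to $-\infty$ this single interior critical point is the maximizer. Evaluating there (the two entropy arguments conveniently coincide at $(\sqrt3-1)/2$), one finds $\max f=f(2-\sqrt3)=-0.18730\ldots$, hence $\lg p=-(0.18730\ldots)(k/3)+o(k)$. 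Substituting this and $d+1=O(n^2)$ into $e\,p\,(d+1)\le1$ and taking logarithms, the requirement becomes $\tfrac{k}{3}(0.18730\ldots)\ge 2\lg n+O(1)+o(k)$, i.e. $k\ge\dfrac{6}{0.18730\ldots}\,\lg n\,(1+o(1))=32.03\,\lg n\,(1+o(1))$, the non-divisibility of $k$ by $3$ being absorbed into the $o(1)$. (This improves on the $q=3,t=3$ instance of (2), which is $2\lg n/\lg(27/26)\approx 36.7\,\lg n$.) I expect the main obstacle to be the asymptotic bookkeeping in the middle paragraph --- confirming that the Stirling corrections and the width of the peak contribute only $o(k)$ to $\lg p$, so that nothing but $\max f$ survives --- together with carrying the numerics precisely enough to land on the stated constant; the construction and the local-lemma application are routine, and the optimization is clean because the critical-point equation factors so nicely.
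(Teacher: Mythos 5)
Your proposal is correct and follows essentially the same route as the paper: the same equal-weight random model, the identical expression for $p$ (your hypergeometric conditioning on $|S\cap T|=r$ reproduces the paper's sum exactly), the same Stirling/Laplace maximization of the summand at $\alpha=2-\sqrt3$ (your entropy function $f$ is just a base-2 repackaging of the paper's $q(A)$, with $\max f=-0.1873\ldots$ matching $\lg\bigl(40.0148/(27/4)^2\bigr)$), and the same local-lemma step with $d=O(n^2)$ yielding $k\le 32.03\lg n(1+o(1))$. The only cosmetic difference is that you apply the local lemma to word-indexed events rather than first union-bounding $\pi\le 27p$, which changes nothing asymptotically.
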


\begin{proof}
Let $n=3m$, and let us randomly place $m$ of each of the letters 0, 1, and 2 in each of the $k$ columns. The probability that any one set of three columns is missing any one  of the 27 ternary three letter words, say 111, is
$$p={{{3m}\choose{m}}\frac{\sum_{j=0}^{m} {m \choose j} \cdot {2m \choose m-j} \cdot {3m-j \choose m}}{{3m \choose m}^{3}}}={\frac{\sum_{j=0}^{m} {m \choose j} \cdot {2m \choose m-j} \cdot {3m-j \choose m}}{{3m \choose m}^{2}}}.$$
This expression is derived as follows:  First place the $m$ ones in the first column in ${{3m}\choose{m}}$ ways.  Then, for some $j$, we pick $j$ of the spots in these $m$ positions to have a $1$ in the second column.  Finally, since the word 111 is to be absent, the $m$ ones in column 3 all have to be in the $3m-j$ spots where the first two columns' entries are not both 1.  The union bound now tells us that the probability $\pi$ that at least one word is missing in any set of three columns is given by 
\[\pi\le27p.\]
Next, we maximize the numerator summand in the expression for $p$ by parametrizing:  Set $j=Am$ for some $0\le A\le1$ and use Stirling's approximation to get (with $C$ representing a generic constant):

\begin{align}
&{m \choose j} \cdot {2m \choose m-j} \cdot {3m-j \choose m}\nonumber\\
=&\frac{m!}{j! (m-j)!} \cdot \frac{(2m)!}{(m-j)! (m+j)!} \cdot \frac{(3m-j)!}{m! (2m-j)!}\nonumber\\
=&\frac{(2m)! (3m-j)!}{j! (m-j)! (m-j)! (m+j)! (2m-j)!}\nonumber\\
\le&\frac{C}{m^{3/2}}\left(\frac{2m}{e}\right)^{2m} \cdot \left(\frac{(3-A)m}{e}\right)^{(3-A)m} \cdot \left(\frac{e}{Am}\right)^{Am} \cdot \left(\frac{e}{(1-A)m}\right)^{2(1-A)m} \nonumber\\
&\cdot \left(\frac{e}{(1+A)m}\right)^{(1+A)m}\cdot \left(\frac{e}{(2-A)m}\right)^{(2-A)m}\nonumber\\
=&\frac{C}{m^{3/2}}\left[\frac{2^{2} \cdot (3-A)^{(3-A)}}{A^{A} \cdot (1-A)^{2(1-A)} \cdot (1+A)^{(1+A)} \cdot (2-A)^{(2-A)}}\right]^{m}.
\end{align}
In order to find the critical value of $A$ in the exponential part of (3), we will maximize
$q(A)=\ln 4+(3-A)\ln(3-A)-A\ln(A)-2(1-A)\ln(1-A)-(1+A)\ln(1+A)-(2-A)\ln(2-A).$
We have:
\begin{align*}
q'(A)=&-\frac{3-A}{3-A}-\ln(3-A)-\left[\frac{A}{A}+\ln(A)\right]-\left[-\frac{2(1-A)}{1-A}-2\ln(1-A)\right]\\
&-\left[\frac{1+A}{1+A}+\ln(1+A)\right]-\left[-\frac{2-A}{2-A}-\ln(2-A)\right]\\
=&-\ln(3-A)-\ln(A)+2 \cdot \ln(1-A)-\ln(1+A)+\ln(2-A)\\
=&\ln\left(\frac{(1-A)^{2} \cdot (2-A)}{(3-A) \cdot A \cdot (1+A)} \right).
\end{align*}
Setting $q'(A)=0$, we see that $A=2-\sqrt{3}$.
Plugging $A=2-\sqrt{3}$ into (3), we see that for each $j$,
\begin{eqnarray}&&{m \choose j} \cdot {2m \choose m-j} \cdot {3m-j \choose m}\nonumber\\
&\le&
\frac{C}{m^{3/2}}\left[\frac{2^{2} \cdot (1+\sqrt{3})^{(1+\sqrt{3})}}{(2-\sqrt{3})^{(2-\sqrt{3})} \cdot (\sqrt{3}-1)^{(\sqrt{3}-1)} \cdot (3-\sqrt{3})^{(3-\sqrt{3})} \cdot \sqrt{3}^{\sqrt{3}}}\right]^{m}\nonumber\\
&\approx & \frac{C}{m^{3/2}}40.0148^{m}.
\end{eqnarray}
Next, we use Stirling's Approximation to estimate the denominator in the expression for $p$:
$$\frac{(3m)!}{(2m)!(m)!}\ge\frac{C}{m^{1/2}}\frac{\left(\frac{3m}{e}\right)^{3m}}{\left(\frac{2m}{e}\right)^{2m} \cdot \left(\frac{m}{e}\right)^{m}}=\frac{C}{m^{1/2}}\left(\frac{27}{4}\right)^{m}.$$
Thus, on bounding the numerator of the expression for $p$ by $m$ times the maximum summand, we get $$\pi\le C\sqrt{m}\frac{40.0148^{m}}{\left(\frac{27}{4}\right)^{2m}}.$$
Now whether or not a given set of three columns is missing at least one word depends on $O(n^2)$ other sets of columns, namely the ones that share at least one column with the given set.  Thus the dependence number $d$ in the Lov\'asz lemma is of magnitude $n^2$.  The lemma states that if $e\pi d<1$ then the probability that we have no sets of such deficient columns is positive, i.e. a construction exists that satisfies the criteria of a covering array.  Now the inequality $e\pi d<1$ may be seen to hold, using elementary algebra, if 
$$m>\frac{2 \lg(n)}{\lg(1.138)}(1+o(1)) \approx 10.67 \lg(n)(1+o(1)),$$
or
$$k=3m>32.03 \lg(n)(1+o(1)).$$  It follows that $k_0\le 32.03 \lg(n)(1+o(1))$, as claimed.
\end{proof}
\noindent REMARKS:  The general bound in (2) yields $k_0(n,3,3)\le 36.73\lg n$, so we have quite an improvement.  Notice also that the exact values of the constants $C$ and the exact nature of the polynomial terms in Stirling's approximation did not affect the end asymptotic result (even though a more careful analysis {\it would} be needed for bounds for specific values of $k$.)  Accordingly, in the rest of the paper we will not explicitly mention these terms, and use Stirling's approximation as 
\[N!\cong\lr\frac{N}{e}\rr^N,\]
where $f(n)\cong g(n)$ will mean that $f(n)$ is bounded both above and below by some rational quantity times $g(n)$.

\subsection{$q=4:  3$-covering Arrays with a Four-letter Alphabet}
\begin{thm}
$$k_0(n,4,3)\le81.28 \cdot \lg(n)(1+o(1)).$$
\end{thm}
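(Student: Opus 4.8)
The plan is to run the proof of Theorem 1 essentially verbatim, now over the alphabet $\{0,1,2,3\}$. Set $n=4m$ and place $m$ copies of each letter in each of the $k$ columns, uniformly and independently. Fixing a triple of columns and the word $111$, the same bijective count applies: place the $m$ ones of the first column in $\binom{4m}{m}$ ways, let $j$ of those positions also carry a $1$ in the second column (there are $\binom{m}{j}\binom{3m}{m-j}$ ways for this), and then force the $m$ ones of the third column into the $4m-j$ positions that are not ``doubly one,'' giving $\binom{4m-j}{m}$ ways. This yields
\[
p=\frac{\sum_{j=0}^{m}\binom{m}{j}\binom{3m}{m-j}\binom{4m-j}{m}}{\binom{4m}{m}^{2}},
\]
and, since there are $4^{3}=64$ length-three words, the union bound gives $\pi\le 64\,p$ for the probability that a given triple of columns misses at least one word.

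Next I would estimate the largest summand. Writing $j=Am$, $0\le A\le1$, and using Stirling in the form $N!\cong(N/e)^{N}$, the powers of $m/e$ cancel and the summand equals, up to a rational factor in $m$, the $m$-th power of
\[
\beta(A)=\frac{3^{3}\,(4-A)^{4-A}}{A^{A}\,(1-A)^{2(1-A)}\,(2+A)^{2+A}\,(3-A)^{3-A}},
\]
where the exponent $2(1-A)$ comes from the two factors $(m-j)!$ in $\binom{m}{j}\binom{3m}{m-j}$ and the factor $(2+A)$ from $(2m+j)!$. Differentiating $\ln\beta$, I expect the constant terms to telescope just as in Theorem 1, leaving
\[
\frac{d}{dA}\ln\beta(A)=\ln\!\lr\frac{(1-A)^{2}(3-A)}{A\,(2+A)(4-A)}\rr,
\]
so the critical equation is $(1-A)^{2}(3-A)=A(2+A)(4-A)$; on expanding, the cubic terms cancel and this reduces to $A^{2}-5A+1=0$, hence $A=\tfrac{5-\sqrt{21}}{2}\approx 0.2087$ (the other root exceeds $1$). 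Substituting gives $\beta\approx 83.97$, and since the sum is at most $m$ times its largest term, the numerator of $p$ is $\cong\beta^{m}$ (the polynomial-in-$m$ factors being irrelevant by the Remark after Theorem 1).

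Finally I would feed this into the Lov\'asz local lemma. Stirling gives $\binom{4m}{m}\cong(256/27)^{m}$, so $\binom{4m}{m}^{2}\cong(256/27)^{2m}$ and
\[
\pi\le 64\,p\cong\lr\frac{\beta}{(256/27)^{2}}\rr^{\!m}\approx\lr\frac{83.97}{89.90}\rr^{\!m}.
\]
A fixed triple of columns is deficient depending only on the $O(n^{2})$ triples that share a column with it, so the dependency degree is $d=O(n^{2})$, and the hypothesis $e\pi d<1$ holds once
\[
m>\frac{2\lg n}{\lg\big((256/27)^{2}/\beta\big)}(1+o(1))\approx\frac{2\lg n}{\lg(1.0706)}(1+o(1))\approx 20.32\,\lg n(1+o(1)).
\]
Multiplying by $4$ gives $k=4m>81.28\,\lg n(1+o(1))$, as claimed.

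The computation presents no conceptual obstacle beyond Theorem 1 — the count for $p$, the Stirling bookkeeping, and the local-lemma step with $d=O(n^{2})$ are formally identical. The one place that needs care is the optimization: one must confirm that the interior critical point $A=\tfrac{5-\sqrt{21}}{2}$ is the global maximizer of $\beta$ on $[0,1]$ (the endpoints give the much smaller $\beta(0)=64$ and $\beta(1)=27/4$, and $\ln\beta$ is readily seen to be concave on the relevant range), and one must evaluate $\beta$ to sufficient precision, since the final constant is sensitive to the narrow gap between $\beta$ and $(256/27)^{2}$.
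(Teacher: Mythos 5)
Your proposal is correct and follows essentially the same route as the paper's proof: the identical counting formula for $p$, the same Stirling parametrization $j=Am$ leading to the quadratic $A^{2}-5A+1=0$ with root $A=\tfrac{5-\sqrt{21}}{2}$, the same maximum $\cong 83.97^{m}$ against the denominator $(256/27)^{2m}$, and the local lemma with $d=O(n^{2})$ yielding $m\approx 20.32\lg n$ and $k_0\le 81.28\lg n$. The only additions you make (the explicit factor $64$ in the union bound and the endpoint/concavity check for the maximizer) are harmless refinements of what the paper leaves implicit.
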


\begin{proof}  The proof is very similar to that of Theorem 1.  
We first find the expression of the probability $p$ of avoiding a particular word in an array of size $4m\times n$, where each column contains an equal number of randomly placed letters 0, 1, 2, and 3.  We have
$$p={\frac{\sum_{j=0}^{m} {m \choose j} \cdot {3m \choose m-j} \cdot {4m-j \choose m}}{{4m \choose m}^{2}}}.$$
We then maximize the summand in the numerator:
\begin{align*}
&{m \choose j} \cdot {3m \choose m-j} \cdot {4m-j \choose m}\\
=&\frac{m!}{j! (m-j)!} \cdot \frac{(3m)!}{(m-j)! (2m+j)!} \cdot \frac{(4m-j)!}{m! (3m-j)!}\\
=&\frac{(3m)! (4m-j)!}{j! (m-j)! (m-j)! (2m+j)! (3m-j)!}\\
\cong&\left(\frac{3m}{e}\right)^{3m} \cdot \left(\frac{(4-A)m}{e}\right)^{(4-A)m} \cdot \left(\frac{e}{Am}\right)^{Am} \cdot \left(\frac{e}{(1-A)m}\right)^{2(1-A)m}\\ &\cdot \left(\frac{e}{(2+A)m}\right)^{(2+A)m}
\cdot \left(\frac{e}{(3-A)m}\right)^{(3-A)m}\\
=&\left[\frac{3^{3} \cdot (4-A)^{(4-A)}}{A^{A} \cdot (1-A)^{2(1-A)} \cdot (2+A)^{(2+A)} \cdot (3-A)^{(3-A)}}\right]^{m}.
\end{align*}
We let
$q(A)=\ln 27+(4-A)\ln(4-A)-A\ln(A)-2(1-A)\ln(1-A)-(2+A)\ln(2+A)-(3-A)\ln(3-A),$
so that
\begin{align*} 
q'(A)=&-\frac{4-A}{4-A}-\ln(4-A)-\left[\frac{A}{A}+\ln(A)\right]-\left[-\frac{2(1-A)}{1-A}-2\ln(1-A)\right]\\
&-\left[\frac{2+A}{2+A}+\ln(2+A)\right]-\left[-\frac{3-A}{3-A}-\ln(3-A)\right]\\
=&-\ln(4-A)-\ln(A)+2 \cdot \ln(1-A)-\ln(2+A)+\ln(3-A).
\end{align*}
This expression is seen to equal zero (and yield a maximum) for $A=\frac{5}{2}-\frac{\sqrt{21}}{2}$.
Substituting this value into the expression ${m \choose j} \cdot {3m \choose m-j} \cdot {4m-j \choose m}$:
yields a maximum value that is $\cong 83.97^{m}$.
Stirling's approximation applied to the denominator yields
$$\frac{(4m)!}{(3m)!(m)!}\cong \left(\frac{256}{27}\right)^{m},$$
and thus, $$\pi\cong\frac{83.97^{m}}{\left(\frac{256}{27}\right)^{2m}}.$$
The Erd\H os-Lov\'asz local lemma with $d=O(n^2)$ and $\pi$ as above then yields
$$m=20.32\lg n(1+o(1)),$$ or $$k_0\le 81.28\lg(n)(1+o(1)),$$
as compared to the value $k_0\le 88.03$ given by the general bound (2).
\end{proof}

\subsection{$3$-covering Arrays with a $q$-letter Alphabet}
This section gives a generalization of Theorems 1 and 2 for an arbitrary alphabet size. 
\begin{thm}
$$k_0(n,q,3)\le B(q) \cdot \lg(n)(1+o(1)),$$
where the constant $B(q)$ is specified below.
\end{thm}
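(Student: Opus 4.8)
The plan is to mirror the proofs of Theorems 1 and 2 verbatim, carrying the alphabet size $q$ as a symbol throughout and only specializing at the very end to read off $B(q)$. First I would set $n=qm$ and place $m$ copies of each letter uniformly at random in each of the $k$ columns. Fixing a target word, say $11\cdots1$, the probability $p$ that a given triple of columns misses it is, by exactly the counting argument used for $q=3,4$ (place the ones in column 1; choose $j$ of those positions to also carry a one in column 2; force column 3's ones to avoid the $j$ doubly-one positions),
\[
p=\frac{\displaystyle\sum_{j=0}^{m}\binom{m}{j}\binom{(q-1)m}{m-j}\binom{qm-j}{m}}{\displaystyle\binom{qm}{m}^{2}}.
\]
The union bound over the $q^3$ words gives $\pi\le q^3 p$, and since whether a triple is deficient depends on only $O(n^2)$ other triples, the dependence number in the Lov\'asz local lemma is $d=O(n^2)$.

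Next I would estimate the numerator by $m$ times its largest summand. Writing $j=Am$ with $0\le A\le 1$ and applying Stirling in the streamlined form $N!\cong (N/e)^N$, the $j$-th summand is $\cong [R_q(A)]^m$ where
\[
R_q(A)=\frac{(q-1)^{q-1}\,(q-A)^{q-A}}{A^{A}\,(1-A)^{2(1-A)}\,(q-2+A)^{q-2+A}\,(q-1-A)^{q-1-A}}.
\]
(One checks this reduces to the bracketed expressions in (3) and its $q=4$ analogue.) To maximize I would differentiate $q_q(A)=\ln R_q(A)$; the $\pm 1$ terms cancel exactly as before, leaving
\[
q_q'(A)=\ln\!\left(\frac{(1-A)^2\,(q-1-A)}{(q-A)\,A\,(q-2+A)}\right),
\]
so the critical point solves the quadratic $(1-A)^2(q-1-A)=A(q-A)(q-2+A)$. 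Expanding, the cubic terms in $A$ cancel and one is left with a genuine quadratic in $A$ whose coefficients are polynomials in $q$; call its relevant root $A^*(q)\in[0,1]$ (for $q=3$ this is $2-\sqrt3$, for $q=4$ it is $\tfrac52-\tfrac{\sqrt{21}}2$, matching the paper). Meanwhile the denominator is $\binom{qm}{m}=\binom{qm}{(q-1)m}\cong (q^q/(q-1)^{q-1})^m$, so
\[
\pi\cong\left[\frac{R_q(A^*(q))\,(q-1)^{2(q-1)}}{q^{2q}}\right]^{m}=:\rho(q)^m .
\]

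Finally I would impose $e\pi d<1$: with $d=O(n^2)$ this forces $\rho(q)^m=o(n^{-2})$, i.e. $m>\dfrac{2\lg n}{\lg(1/\rho(q))}(1+o(1))$, hence $k=qm$ gives
\[
B(q)=\frac{2q}{\lg\!\bigl(1/\rho(q)\bigr)},
\]
with $\rho(q)$ the explicit quantity above evaluated at $A=A^*(q)$. Substituting the two known values should recover $B(3)\approx 32.03$ and $B(4)\approx 81.28$, which serves as a consistency check. The main obstacle I anticipate is purely bookkeeping rather than conceptual: verifying that the cubic-in-$A$ terms really do cancel so that $A^*(q)$ is the root of an honest quadratic, and then simplifying $R_q(A^*(q))$ enough that $B(q)$ has a clean closed form. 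I would also note (as the paper's remark after Theorem 1 does) that the polynomial Stirling corrections and the generic constants $C$ are absorbed into the $(1+o(1))$ and do not affect $B(q)$, and one should check that $B(q)$ indeed improves on the general bound $(t-1)\lg n/\lg(q^t/(q^t-1))$ from (2) for each $q$, or at least characterize the range of $q$ where it does.
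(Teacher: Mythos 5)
Your proposal reproduces the paper's argument essentially verbatim: the same expression for $p$, the same Stirling parametrization $j=Am$, the same quadratic $A^2-(q+1)A+1=0$ (the paper's root is $A=\frac{(q+1)-\sqrt{(q+1)^2-4}}{2}$, agreeing with your $A^*(q)$), and the same final constant $B(q)=2q/\lg(1/\rho(q))$, where your $\rho(q)$ is exactly the paper's $D$. The only slip is the harmless typo $n=qm$ for $k=qm$, which the paper itself also makes.
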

\begin{proof}
We first find a generalized expression for the probability $p$ of avoiding a particular word under a similar probability model as before:
$$p={\frac{\sum_{j=0}^{m} {m \choose j} \cdot {(q-1)m \choose m-j} \cdot {qm-j \choose m}}{{qm \choose m}^{2}}}.$$
The numerator summand can be written as
\begin{align*}
&{m \choose j} \cdot {(q-1)m \choose m-j} \cdot {qm-j \choose m}\\
=&\frac{m!}{j! (m-j)!} \cdot \frac{((q-1)m)!}{(m-j)! ((q-2)m+j)!} \cdot \frac{(qm-j)!}{m! ((q-1)m-j)!}\\
=&\frac{((q-1)m)! (qm-j)!}{j! (m-j)! (m-j)! ((q-2)m+j)! ((q-1)m-j)!}\\
\cong&\left(\frac{(q-1)m}{e}\right)^{(q-1)m} \cdot \left(\frac{(q-A)m}{e}\right)^{(q-A)m} \cdot \left(\frac{e}{Am}\right)^{Am}\cdot \left(\frac{e}{(1-A)m}\right)^{2(1-A)m} \\
&\cdot \left(\frac{e}{((q-2)+A)m}\right)^{((q-2)+A)m} \cdot \left(\frac{e}{((q-1)-A)m}\right)^{((q-1)-A)m}\\
=&\left[\frac{(q-1)^{(q-1)} \cdot (q-A)^{(q-A)}}{A^{A} \cdot (1-A)^{2(1-A)} \cdot ((q-2)+A)^{((q-2)+A)} \cdot ((q-1)-A)^{((q-1)-A)}}\right]^{m}
\end{align*}
Setting 
$r(A)=\ln (q-1)^{(q-1)}+(q-A)\ln(q-A)-A\ln(A)-2(1-A)\ln(1-A)-((q-2)+A)\ln((q-2)+A)-((q-1)-A)\ln((q-1)-A)$, we see that
\begin{align*}
r'(A)=&-\frac{q-A}{q-A}-\ln(q-A)-\left[\frac{A}{A}+\ln(A)\right]-\left[-\frac{2(1-A)}{1-A}-2\ln(1-A)\right]\\
&-\left[\frac{(q-2)+A}{(q-2)+A}+\ln((q-2)+A)\right]-\left[-\frac{(q-1)-A}{(q-1)-A}-\ln((q-1)-A)\right]\\
=&-\ln(q-A)-\ln(A)+2 \cdot \ln(1-A)-\ln((q-2)+A)+\ln((q-1)-A),
\end{align*}
and that $r'(A)=0$ if 
$$\ln\left(\frac{(1-A)^{2} \cdot (q-1-A)}{(q-A) \cdot A \cdot (q-2+A)} \right)=0,$$
or if
$$A^{2}-A(q+1)+1=0.$$
A feasible solution to this quadratic is 
\begin{equation}A=\frac{(q+1)- \sqrt{(q+1)^{2}-4}}{2}\end{equation}
Incorporating the denominator of the expression for $p$, we see that 
\begin{eqnarray}p&\cong& \left[\frac{(q-1)^{3(q-1)} \cdot (q-A)^{(q-A)}}{q^{2q}\cdot A^{A} \cdot (1-A)^{2(1-A)} \cdot (q-2+A)^{(q-2+A)} \cdot (q-1-A)^{(q-1-A)}}\right]^{m}\nonumber\\&:=&D^m,\end{eqnarray}
with $A$ given by (5) and with $\pi\le q^t\cdot p$. 
Thus setting $e\pi d<1$, we obtain
$$k_0=qm\le  B(q)\lg n,$$
where $$B(q)=\frac{2q}{\lg(1/D)},$$ and with $D$ given by (6).
\end{proof}
\noindent REMARK:  A first order approximation to the maximizing value of $A$ is given by $A=\frac{1}{q+1}$; use of this approximation greatly streamlines the value of $p$ in (6), though computation of the optimal value of $p$ is not hard for any value of $q$.
\section{4-Covering Binary Arrays}
\begin{thm}
$$k_0(n,2,4)\le27.32 \cdot \lg(n) (1+o(1)).$$
\end{thm}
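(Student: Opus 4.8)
The plan is to mimic the proof of Theorem 1, but now the combinatorial bookkeeping requires two nested parameters because we are fixing four columns and forbidding one $4$-letter binary word, say $1111$. First I would set $n=2m$ and place $m$ zeros and $m$ ones uniformly at random in each of the $k$ columns. Computing the probability $p$ that four given columns miss the word $1111$: fix the $m$ ones in column~1 in $\binom{2m}{m}$ ways; let $j$ of those positions also carry a $1$ in column~2 (so $\binom{m}{j}\binom{m}{m-j}$ choices for column~2, up to the normalization), let $i$ of the $j$ doubly-one positions also carry a $1$ in column~3, and finally column~4 must avoid the $i$ triply-one positions. This yields a double sum
\[
p=\frac{\sum_{j=0}^{m}\sum_{i=0}^{j}\binom{m}{j}\binom{j}{i}\binom{m}{m-i}^{\!?}\cdots}{\binom{2m}{m}^{3}},
\]
whose exact index ranges and binomial factors I would pin down carefully; the denominator is $\binom{2m}{m}^3$ because column~1 contributes $\binom{2m}{m}$ and columns $2,3,4$ each contribute another $\binom{2m}{m}$ normalization. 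Then $\pi\le 2^4\,p=16p$ by the union bound over the $16$ binary $4$-words.

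The analytic heart is to estimate the double sum by its maximal term. Writing $j=Am$ and $i=Bm$ with $0\le B\le A\le 1$, Stirling's approximation (in the streamlined form $N!\cong (N/e)^N$ adopted after Theorem 1) converts the summand into $[\,g(A,B)\,]^m$ for an explicit function $g$ built from terms like $A^A$, $(A-B)^{A-B}$, $B^B$, $(1-A)^{1-A}$, etc. Since the number of terms is only $O(m^2)$, which is absorbed into the $(1+o(1))$, we get $p\cong \big(\max_{A,B} g(A,B)\big)^m /\big(2^{2m}\big)^{3}\cdot$(the right power), i.e. $p\cong D^m$ for a constant $D=D(\text{optimal }A,B)$. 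I would then maximize $\ln g$ by setting both partial derivatives to zero. As in the earlier theorems the $\partial/\partial B$ equation should reduce to a clean logarithmic identity (something like $(A-B)^2 = B\cdot(\text{linear in }B)$), and likewise for $\partial/\partial A$; solving the resulting two-equation system — plausibly reducible to a single polynomial equation of low degree — gives the critical $(A^\*,B^\*)$, hence the numerical value of $D$.

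Once $D$ is in hand, the Lov\'asz local lemma step is identical to before: whether a given $4$-set of columns is deficient depends on only the $4$-sets sharing a column with it, so the dependence degree is $d=O(n^3)$ (the exponent is $t-1=3$, which is why the final constant differs from the $t=3$ cases). Requiring $e\pi d<1$, i.e. $16 e\, D^m\, O(n^3)<1$, forces $m > \frac{3\lg n}{\lg(1/D)}(1+o(1))$, and then $k=2m\le \frac{6\lg n}{\lg(1/D)}(1+o(1))$. Matching this to the claimed bound means the computation should yield $\lg(1/D)\approx 6/27.32\approx 0.2196$, i.e. $1/D\approx 1.164$; I would verify the arithmetic produces exactly $27.32$.

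The main obstacle I anticipate is the two-variable optimization: unlike the single-variable quadratics $A^2-(q+1)A+1=0$ of the earlier theorems, here the stationarity conditions form a coupled nonlinear system, and the challenge is to decouple them into something solvable in closed form (or at least to argue the interior maximum is unique and compute it reliably). A secondary subtlety is getting the double-sum combinatorial identity for $p$ exactly right — in particular confirming that the correct normalization is $\binom{2m}{m}^{3}$ and that the inner summand's binomial coefficients have the right arguments — since an error there propagates directly into $D$ and hence into the final constant.
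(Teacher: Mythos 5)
Your plan coincides with the paper's proof: equal-weight random columns, the same double-sum expression for $p$ (your verbal description of the count matches the paper's $\sum_{j}\binom{2m}{j}^2\sum_{i}\binom{j}{i}\binom{4m-j}{2m-i}\binom{4m-i}{2m}$ over $\binom{4m}{2m}^3$, up to your normalization $k=2m$ versus the paper's $k=4m$), the union bound $\pi\le 16p$, Stirling's formula with a two-parameter maximization, and the local lemma with $d=O(n^3)$. The obstacle you flag resolves exactly as you hope: optimizing over the inner index first gives the closed form $A=(3-\sqrt{9-2B})/B$, after which the single remaining stationarity equation in $B$ is solved numerically ($B\approx 0.9126$, $A\approx 0.3522$), yielding $\pi\cong(e^{8.013}/16^{3})^{m}$ and hence the constant $27.32$.
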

\begin{proof}
We first find the expression for the probability $p$ of avoiding a particular word (of the sixteen total) in a random equal weight array:  We set $k=4m$ and note that
$$p={\frac{\sum_{j=0}^{2m} {2m \choose j}{2m \choose j} \sum_{i=0}^{j} {j \choose i}{4m-j \choose 2m-i}{4m-i \choose 2m}}{{4m \choose 2m}^{3}}}.$$
The expression may be justified by multiplying and dividing by ${{4m}\choose{2m}}$ and arguing that the numerator represents the number of ways of avoiding the word 1111 in any four selected columns as follows:  We first select $2m$ ones in the first column in ${{4m}\choose{2m}}$ ways.  Then, for some $j$, we pick $j$ ones in the second column to correspond to the positions with a 1 in the first column.  We do the same for the positions with a 0 in the first column, choosing $2m-j$ of these.  For some $i$ we now pick $i$ ones in column 3 so as to form a 111.  Finally, we make sure that 1111 does not occur.  The rest of the proof follows the same steps as in the previous section.  Parametrizing by setting $j=Bn$, $i=ABn$, where $0 \leq A,B \leq 1$, we calculate that the summand $f(j,i)$ in the expression for $p$ equals

\begin{align*}
&f(j,i)={2m \choose j}{2m \choose j} {j \choose i}{4m-j \choose 2m-i}{4m-i \choose 2m}\\
=&\left(\frac{(2m)!}{j! (2m-j)!}\right)^{2} \cdot \frac{j!}{i! (j-i)!} \cdot \frac{(4m-j)!}{(2m-i)! (2m+i-j)!} \cdot \frac{(4m-i)!}{(2m)! (2m-i)!}\\
=&\left[\frac{2^{2} \cdot (4-B)^{(4-B)} \cdot (4-AB)^{4-AB}}{B^{B} \cdot (AB)^{AB} \cdot ((1-A)B)^{(1-A)B} \cdot (2-AB)^{2(2-AB)}}\right]^m\\ 
&\cdot \left[\frac{1}{(2-B)^{2(2-B)} \cdot (2+AB-B)^{2+AB-B}}\right]^{m}.
\end{align*}
We now find the value of $i$ for which the maximum occurs in the inner sum:
\begin{align*}
&{j \choose i}{4m-j \choose 2m-i}{4m-i \choose 2m} \\
=&\frac{j!}{i! (j-i)!} \cdot \frac{(4m-j!}{(2m-i)! (2m+i-j)!} \cdot \frac{(4m-i)!}{(2m)! (2m-i)!}\\
=&\left[\frac{B^{B} \cdot (4-B)^{(4-B)} \cdot (4-AB)^{4-AB}}{2^{2} \cdot (AB)^{AB} \cdot ((1-A)B)^{(1-A)B} \cdot (2-AB)^{2(2-AB)}}\right]^m\\ &\cdot \left[\frac{1}{(2+AB-B)^{2+AB-B}}\right]^{m}
\end{align*}
As before, we set $q(A)=(\ln (B^{B}/4))+(4-B)\ln(4-B)+(4-AB)\ln(4-AB)-AB\ln(AB)
-(B-AB)\ln(B-AB)-2(2-AB)\ln(2-AB)-(2+AB-B)\ln(2+AB-B)$,
so that
\begin{align*}
q'(A)=&-B-B\ln(4-AB)-(B+B\ln(AB))-(-B-B\ln(B-AB))\\
&-2(-B-B\ln(2-AB))-(B+B\ln(AB+2-B)).
\end{align*}
\\
Setting $q'(A)=0$ yields the critical value
$$A=\frac{3 - \sqrt{9-2B}}{B}.$$
Plugging the critical value of $A$ into the full expression for $f(j,i)$, we see that
\begin{eqnarray*}
f(j,i)&\le&\left[\frac{2^{2} \cdot (4-B)^{(4-B)} \cdot (1+\sqrt{9-2B})^{1+\sqrt{9-2B}}}{B^{B} \cdot (3-\sqrt{9-2B})^{3-\sqrt{9-2B}} \cdot (B-3+\sqrt{9-2B})^{B-3+\sqrt{9-2B}} }\right]^{m}\\
&\cdot& \left[\frac{1}{(\sqrt{9-2B}-1)^{2(\sqrt{9-2B}-1)}\cdot(2-B)^{2(2-B)} }\right]^{m}\\
&\cdot&\left[\frac{1}{(5-B-\sqrt{9-2B})^{5-B-\sqrt{9-2B}}}\right]^m.
\end{eqnarray*}
Repeating the same process, we set $r(B)=2\ln(2)+(4-B)\ln(4-B)+(1+\sqrt{9-2B})\ln(1+\sqrt{9-2B})-B\ln{B}
-(3-\sqrt{9-2B})\ln(3-\sqrt{9-2B})-(B-3+\sqrt{9-2B})\ln(B-3+\sqrt{9-2B})
-(2\sqrt{9-2B}-2)\ln(\sqrt{9-2B}-1)-(4-2B)\ln(2-B)
-(5-B-\sqrt{9-2B})\ln(5-B-\sqrt{9-2B})$, and set $r'(B)=0$ to obtain the critical value (using Maple) of
$B \approx 0.912621974615847$.  Since
$A=\frac{3 - \sqrt{9-2B}}{B}$, we get
$A \approx 0.352201128737$,
and plugging these values of $A$ and $B$, we get the numerator of $p$ bounded by
$m^2\cdot(e^{8.013})^m$.  Since the denominator expression is $\cong 16^{3m},$ we get that
\[\pi\le16p\cong \lr\frac{e^{8.013}}{16^3}\rr^m.\]
Since $d=O(n^3)$, the Lov\'asz lemma yields
that a suitable 4-covering array exists if 
$$m>\frac{3\lg(n)}{\lg(1.3558)} \approx 6.83082\lg(n),$$
and thus
$$k_0\le 4(6.83082)\lg n=27.32\lg(n).$$
\end{proof}

\noindent REMARK:  Our upper bound of $27.32\lg n$ should be compared to the bound of $32.22\lg n$ as given by (2).  Also, the analysis in this section can readily be extended to $q$-ary 4-covering arrays, $q\ge3$, but we do not provide details.

\section{Acknowledgement}  The research of AG and ZK was supported by NSF grant 1263009.  RY also participated in the project without NSF support but with a great level of enthusiasm.

\end{document}